\newcommand{\setN}{\mathbb{N}}
\newcommand{\RCAo}{\mathsf{RCA}_0}
\newcommand{\WKLo}{\mathsf{WKL}_0}
\theoremstyle{definition}
\newtheorem{theorem}{Theorem}[section]
\newtheorem{proposition}[theorem]{Proposition}
\newtheorem{corollary}[theorem]{Corollary}
\newtheorem{question}[theorem]{Question}
\begin{document}
\title{The logical strength of\\[2pt] K\"{o}nig's edge coloring theorem}
\author{Carl Mummert\footnote{Department of Mathematics, Marshall University. Email:~\texttt{mummertc@marshall.edu}}}
\date{\today}
\maketitle

\begin{abstract}
\noindent K\"{o}nig's edge coloring theorem says that a bipartite graph with
maximal degree $n$ has an edge coloring with no more than $n$ colors.
We explore the computability theory and Reverse Mathematics aspects of this theorem.
Computable bipartite graphs with degree bounded by $n$ have computable
edge colorings with $n+1$ colors, but the theorem that there is an edge
coloring with $n$ colors is equivalent to $\WKLo$ over $\RCAo$. This gives an additional
proof of a theorem of Hirst: $\WKLo$ is equivalent over $\RCAo$
to the principle that every countable bipartite $n$-regular graph is the union of 
$n$ complete matchings.   We describe open questions related to Vizing's edge coloring
theorem and a countable form of Birkhoff's theorem.
\end{abstract}

\section{Introduction}

A 1916 theorem of D.\ K\"{o}nig that says that a bipartite graph with maximal degree $n$ has an edge coloring with $n$ colors.  This note describes the computability and Reverse Mathematics strengths of an infinitary version of this result and related theorems on edge colorings.   There has been significant previous work on this problem, but interesting questions remain open. 

The field of Reverse Mathematics looks to formalize mathematical theorems in second-order arithmetic
to study the logical strength of the formalized principles.  These strengths are often closely
related to the level of noncomputability of solutions to computable instances of the theorems.    We follow Simpson~\cite{Simpson09} for definitions and background on Reverse Mathematics.

The computability question is: if $G$ is a computable bipartite graph with maximal degree~$n$, is there always a computable edge coloring with $n$ colors? If not, what can we say about the noncomputability of the edge colorings that do exist? The answers are interesting. Even if $G$ is highly computable, there may not be a a computable edge coloring with $n$ colors, but there is always a computable edge coloring with a somewhat larger finite number of colors.  In this way, the number of colors we are allowed to use affects how hard it is to find the coloring. 

In a personal communication, Sean Sovine pointed out that a version of K\"{o}nig's edge coloring theorem 
(similar to Corollary~\ref{cor:kon2} below) is used in Fourier analysis, for example in Lemma~7 of Grafakos, He, and Honz\'{\i}k~\cite{GHH18}. In that version, the coloring may use $n^2$ colors. Sovine noticed there is an effective coloring with no more than $2n-1$ colors, as in Proposition~\ref{prop:2nminus1}, and asked if the decomposition can be performed with no more than~$n$ colors. K\"{o}nig's edge coloring theorem shows (in ZFC) that the edge chromatic number is~$n$ if there are no requirements on the effectiveness of the coloring. 

In section~\ref{sec:graph}, we survey the classical results from graph theory and develop a combinatorial equivalence between edge colorings of graphs and decompositions of subsets of $\setN \times \setN$ into partial transversals.  In section~\ref{sec:comp}, we show the edge chromatic number $\chi'(G)$ may vary when we formalize these results in different subsystems of second-order arithmetic. 
$\RCAo$ can prove that if $G$ is a countable bipartite graph with maximum degree $n$ then $\chi'(G) \leq 2n-1$ while $\WKLo$ is equivalent to the proposition that $\chi'(G)$ must be~$n$.  In particular, $\RCAo$ proves that $\chi'(G) \leq 3$  when $n = 2$, while $\WKLo$ is equivalent to the proposition that $\chi'(G) = 2$ in that case. 

There has been significant previous Reverse Mathematics research on graph colorings and matchings. The primary goal of this note is to present a self-contained exposition focusing on K\"onig's edge coloring theorem.   Bean~\cite{Bean76} studied vertex colorings and constructed a computable planar graph that has a vertex coloring with 3 colors, but no computable vertex coloring with a finite number of colors. Kierstead~\cite{Kierstead81} extended Bean's work and also proved that every highly computable, $k$-edge colorable graph has a computable edge coloring with $k+1$ colors. Schmerl~\cite{Schmerl} gives a summary of these results, produces new computable counterexamples, and provides a thorough bibliography of previous results.  At the time, these results were not written in the language of Reverse Mathematics, but the proof methods lead directly to Reverse Mathematics theorems. 

Hirst~\cite{Hirst90} studied the Reverse Mathematics strength of Hall's marriage problem in particular. Hall's marriage theorem is related to the results here in a general sense, and like the theorems here it gives a formal principle equivalent to~$\WKLo$ over~$\RCAo$. The class of graphs to which Hall's theorem applies is different, however.  Hall's theorem applies to graphs with the Hall condition, which may have unbounded degree. We consider graphs with bounded degree which may not satisfy the Hall condition. These
conditions coincide when we look at $n$-regular graphs. 

Shafer~\cite{Shafer11} studied the perfect matching version of Hall's theorem. He also studied a theorem of Birkhoff which says that every doubly stochastic matrix is a convex linear combination of permutation matrices.  K\"onig's edge coloring theorem can be viewed as a natural-number-valued variant in which we consider partial permutation matrices.  Theorem~\ref{thm:match} below was originally proven by Hirst~\cite{Hirst90}; also see Shafer~\cite[Theorem~6.1.8]{Shafer11}.

I would like to thank Sean Sovine, James Schmerl, Michael Schroeder, and Paul Shafer for helpful communications about the results described here. 

\section{Graph theory}\label{sec:graph}

We refer to Diestel~\cite{Diestel00} for general background on graph theory. In this note, a \emph{graph} will always be an undirected graph, which may be countably infinite. We do not generally require graphs to be simple. A graph is \emph{bipartite} if the vertices can be partitioned into two sets $L$ and $R$ so that no edge connects two vertices in $L$ or two vertices in~$R$. A graph is \emph{regular} if every vertex has the same degree, and $n$-regular if that degree is always~$n$.

A (proper) \emph{edge coloring} of a graph $G$, using $k$ colors, is a function that assigns each edge a color from $1$ to $k$ so that no two edges incident to a common vertex have the same color.   The \emph{edge chromatic number} $\chi'(G)$ is the least $k$ such that $G$ has an edge coloring with $k$ colors, if such a $k$ exists.  This number is also the \emph{chromatic index} of the graph.% When $G$ is clear from context this number is denoted~$\chi'$. 

\begin{theorem}[K\"{o}nig 1916]\label{thm:kon}
Suppose that $n \in \setN$ and $G$ is a finite bipartite graph in which each vertex
has degree less than or equal to~$n$. Then the edge chromatic number of $G$ is no more than~$n$.
\end{theorem}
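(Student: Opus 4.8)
The plan is to argue by induction on the number of edges of $G$, using the classical technique of Kempe chains (alternating-path color swaps). The base case, a graph with no edges, is trivial. For the inductive step, I would remove a single edge $e = uv$ and apply the induction hypothesis to obtain a proper edge coloring of $G - e$ with the colors $\{1, \ldots, n\}$. The goal is then to recolor so that $e$ itself can receive one of these $n$ colors without creating a conflict.

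First I would observe that, since every vertex has degree at most $n$, after removing $e$ both $u$ and $v$ have degree at most $n - 1$ in $G - e$. Hence at each of $u$ and $v$ there is at least one color, among the $n$ available, that is not used on any incident edge; call such a color \emph{missing} at that vertex. If some color is missing at both $u$ and $v$ simultaneously, I assign it to $e$ and the step is complete. The interesting case is when the sets of missing colors at $u$ and at $v$ are disjoint: fix a color $\alpha$ missing at $u$ and a color $\beta$ missing at $v$, noting that $\alpha$ then appears at $v$ and $\beta$ appears at $u$.

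The key construction is the subgraph $H$ of $G - e$ consisting of all edges colored $\alpha$ or $\beta$. Because the coloring is proper, every vertex meets at most one edge of each color, so $H$ has maximum degree at most $2$ and its connected components are simple paths and (necessarily even) cycles. Both $u$ and $v$ are endpoints of paths in $H$: the vertex $u$ lies on a single $\beta$-edge (as $\alpha$ is missing there) and $v$ lies on a single $\alpha$-edge. I would let $P$ be the maximal alternating path of $H$ starting at $u$ and swap the colors $\alpha$ and $\beta$ along $P$; this keeps the coloring proper and makes $\beta$ missing at $u$, while disturbing $v$ only if $v$ lies on $P$. It then suffices to color $e$ with $\beta$, provided $P$ does not terminate at $v$.

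The main obstacle, and the only place bipartiteness is needed, is showing that $P$ cannot end at $v$. I would argue by a parity observation: along $P$ the edges alternate between colors $\beta$ and $\alpha$, beginning with a $\beta$-edge at $u$; if $P$ reached $v$ it would have to arrive along an $\alpha$-edge (the unique colored edge at $v$), forcing $P$ to have an even number of edges. But an even-length path between $u$ and $v$ would place them on the same side of the bipartition, contradicting the fact that $uv = e$ is an edge of the bipartite graph $G$. Hence $u$ and $v$ lie in distinct components of $H$, the swap leaves $v$ untouched, and $\beta$ becomes available at both endpoints of $e$. It is precisely this parity argument that fails for general graphs, where the analogous method yields only the weaker bound $n + 1$ of Vizing's theorem. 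As an alternative route one could embed $G$ into a finite $n$-regular bipartite graph and decompose the latter into $n$ perfect matchings via Hall's theorem, reading off the matchings as color classes; I prefer the inductive argument here because it is self-contained and avoids invoking Hall's theorem separately.
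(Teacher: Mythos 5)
Your proof is correct: the induction on the number of edges, the Kempe-chain swap along the maximal $\alpha$--$\beta$ alternating path from $u$, and the parity argument showing that path cannot terminate at $v$ in a bipartite graph together form a complete and sound argument. This is essentially the same proof the paper relies on --- the paper states Theorem~\ref{thm:kon} without proof and defers to Diestel~\cite{Diestel00}, whose proof of K\"{o}nig's theorem is exactly this alternating-path induction.
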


This is a special case of Vizing's theorem, which states that a graph with degree bounded by $n$ must have edge chromatic number less than or equal to~$n+1$.  There is substantial literature on Vizing's theorem, especially on the question of which graphs have edge chromatic number $n$ and which have edge chromatic number $n+1$. These classes of graphs are known as ``class one'' and ``class two'', respectively. See Akbari, Cariolaro, D Chavooshi,  Ghanbari, Zare~\cite{ACCGZ12} for a summary of some results, and  Holyer~\cite{H81} for the difficulty of determining whether a finite graph is class one. 

There are several combinatorially equivalent ways to state K\"{o}nig's edge coloring theorem.  One uses graph matchings. A (partial) \emph{matching} in a graph is a set of edges in which no two edges share a common vertex. A \emph{complete matching} is a matching so that every vertex of the graph is contained in some edge of the matching.  In this language, the theorem is stated as follows. 

\begin{corollary}[K\"{o}nig 1916]\label{cor:kon2}
Suppose that $n \in \setN$ and $G$ is a finite bipartite graph in which each vertex
has degree less than or equal to~$n$. Then $G$ is the union of no more than~$n$ matchings.
If $G$ is a finite bipartite $n$-regular graph then $G$ is the union of $n$ complete matchings. 
\end{corollary}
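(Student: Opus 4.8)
The plan is to derive Corollary~\ref{cor:kon2} directly from Theorem~\ref{thm:kon} by translating the edge coloring into a collection of matchings. Given an edge coloring of $G$ with at most $n$ colors, the key observation is that for each fixed color $c$, the set $M_c$ of edges assigned color $c$ forms a matching: by definition of a proper edge coloring, no two edges sharing a vertex receive the same color, so no two edges in $M_c$ share a vertex. Since every edge receives exactly one of the at most $n$ colors, $G$ is the union of the sets $M_1, \ldots, M_n$, which gives the first assertion.

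For the second assertion, suppose $G$ is $n$-regular. First I would apply the first part to write $G$ as a union of matchings $M_1, \ldots, M_n$. The claim to establish is that each $M_c$ is in fact a \emph{complete} matching, i.e.\ that every vertex is incident to some edge of color $c$. The argument I would use is a counting argument: fix a vertex $v$. Since $v$ has degree exactly $n$ and the $n$ edges incident to $v$ must all receive distinct colors, these $n$ edges use all $n$ colors exactly once each. Hence for every color $c$, the vertex $v$ is incident to precisely one edge of color $c$, so $v$ is covered by $M_c$. As $v$ was arbitrary, each $M_c$ covers every vertex and is therefore a complete matching, giving the desired decomposition into $n$ complete matchings.

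The main obstacle, such as it is, lies in verifying that an $n$-coloring of an $n$-regular graph must be \emph{surjective} on the colors incident to each vertex; this is exactly where regularity is used, as opposed to the mere degree bound of the first part. The pigeonhole-style reasoning is immediate once one notes that $n$ distinctly colored edges drawn from a palette of $n$ colors must exhaust the palette. I would take care to state this cleanly, since it is the only place where the hypothesis of $n$-regularity (rather than degree at most $n$) is genuinely needed, and it is what upgrades ``matching'' to ``complete matching.''
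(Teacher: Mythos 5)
Your proof is correct, and it fills in exactly the argument the paper leaves implicit: the paper states Corollary~\ref{cor:kon2} as an immediate reformulation of Theorem~\ref{thm:kon} without a written proof, the intended translation being precisely that color classes of a proper edge coloring are matchings, and that in an $n$-regular graph the $n$ distinctly colored edges at each vertex exhaust the $n$ colors, so every color class covers every vertex. Your identification of regularity (via the pigeonhole step) as the point where ``matching'' upgrades to ``complete matching'' is exactly right, and it is the same observation the paper later uses in the proof of Theorem~\ref{thm:match}.
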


We are interested in the following infinitary corollary of Theorem~\ref{thm:kon}. 

\begin{corollary}\label{cor:kon1}
Suppose that $n \in \setN$ and $G$ is a countable bipartite graph with vertex set $\setN$ in which each  vertex has degree less than or equal to~$n$. Then $\chi'(G) \leq n$.
\end{corollary}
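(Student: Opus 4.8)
The plan is to deduce the infinite case from the finite case (Theorem~\ref{thm:kon}) by a compactness argument. Since $G$ has vertex set $\setN$ and each vertex has degree at most $n$, the edge set $E$ is countable, and I would fix an enumeration $e_0, e_1, e_2, \dots$ of $E$; such an enumeration exists because for each $s$ there are only finitely many edges incident to a vertex below $s$. For each $k \in \setN$, let $G_k$ be the finite subgraph consisting of the edges $e_0, \dots, e_{k-1}$ together with their endpoints. Each $G_k$ is a finite subgraph of $G$, hence bipartite, and every vertex of $G_k$ has degree at most its degree in $G$, which is at most $n$; so Theorem~\ref{thm:kon} applies to each $G_k$.

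The main step is to organize the finite colorings into a finitely branching tree and apply K\"onig's lemma. Let $T$ be the set of all proper edge colorings $\sigma \colon \{e_0, \dots, e_{k-1}\} \to \{1, \dots, n\}$, for $k \in \setN$, ordered by extension. Because deleting edges from a proper coloring leaves a proper coloring, the restriction of any element of $T$ to a shorter initial segment is again in $T$, so $T$ is a tree; and each node has at most $n$ immediate successors, since the next edge $e_k$ may receive any of $n$ colors, so $T$ is finitely branching. Applying Theorem~\ref{thm:kon} to $G_k$ produces a proper edge coloring of $G_k$ with at most $n$ colors, and regarding this coloring as a function on $\{e_0, \dots, e_{k-1}\}$ gives an element of $T$ of length $k$; thus $T$ has nodes of every length and is therefore infinite. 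By K\"onig's lemma, $T$ has an infinite branch, and the union of the colorings along that branch is a proper edge coloring $c \colon E \to \{1, \dots, n\}$ of all of $G$. Hence $\chi'(G) \leq n$.

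The genuine content is isolated in the last step: the passage from arbitrarily long finite colorings to a single infinite coloring is a compactness principle, and this is exactly where the infinitary statement goes beyond the finite theorem. Everything else is bookkeeping, namely checking that the finite subgraphs $G_k$ inherit bipartiteness and the degree bound and that proper colorings are closed under restriction. I expect the main obstacle to be handled entirely by K\"onig's lemma; indeed, in the Reverse Mathematics setting of Section~\ref{sec:comp} this use of compactness is precisely what should force the equivalence with $\WKLo$, so I would not expect any weaker, purely arithmetical argument to suffice.
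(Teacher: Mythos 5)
Your proof is correct and follows essentially the same route as the paper: the paper's formalized proof (Proposition~\ref{prop:klforward}) likewise enumerates the edges, forms the tree of proper partial colorings by at most $n$ colors of initial segments of the edge enumeration, notes by the finite theorem that the tree has nodes of every length, and extracts an infinite path by (bounded) K\"onig's lemma. The only cosmetic difference is that the paper explicitly disposes of the trivial case where $G$ has finitely many edges before fixing an infinite enumeration, a case your argument should also note since the tree argument presumes infinitely many edges.
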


This corollary can be stated in a combinatorially equivalent way using infinite zero-one matrices, that is, subsets of $\setN \times \setN$. For a subset $S$ of $\setN\times \setN$, a \emph{row} is a set of the form $R_i = S \cap \{(i,j) : j \in \setN\}$ for some $i \in \setN$, and a \emph{column} is a set of the form $C_j = S \cap \{(i,j) : i \in \setN\}$ for some $j \in \setN$.  A \emph{partial transversal}  is a subset of $S$ with no more than one point in each row and no more than one point in each column.
 
\begin{corollary}\label{cor:kon3}
Suppose that $S$ is a subset of $\setN \times \setN$ in which each column and each row of $S$ contains at most $n$ points. Then $S$ can be written as a union of no more than $n$ partial transversals.  
\end{corollary}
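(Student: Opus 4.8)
The plan is to deduce this directly from Corollary~\ref{cor:kon1} by converting $S$ into a bipartite graph whose edges are precisely the points of $S$ and whose matchings correspond exactly to the partial transversals of $S$. First I would build a graph $G_S$ with vertex set $\setN$, splitting the vertices into a ``row'' side $L = \{2i : i \in \setN\}$ and a ``column'' side $R = \{2j+1 : j \in \setN\}$, and placing an edge between $2i$ and $2j+1$ precisely when $(i,j) \in S$. Because $S$ is a set rather than a multiset, each pair $(i,j)$ occurs at most once, so $G_S$ is a simple bipartite graph, and the map sending the edge $\{2i, 2j+1\}$ to the point $(i,j)$ is a bijection between the edges of $G_S$ and the points of $S$. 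Isolated vertices (rows or columns of $S$ containing no points) are harmless, since degree $0$ satisfies the degree bound.

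Next I would verify the hypotheses of Corollary~\ref{cor:kon1}. The degree of the vertex $2i$ equals the number of points of $S$ in the row $R_i$, and the degree of $2j+1$ equals the number of points in the column $C_j$; by assumption both quantities are at most $n$, so every vertex of $G_S$ has degree at most~$n$. Corollary~\ref{cor:kon1} then gives an edge coloring of $G_S$ using no more than $n$ colors. The final step is to transport this coloring back across the bijection: by the definition of a proper edge coloring, each color class is a set of edges no two of which are incident to a common vertex, and under the correspondence this says that no two of the associated points of $S$ share a row vertex $2i$ or a column vertex $2j+1$, that is, no two share a row or a column. Hence each color class corresponds to a partial transversal of $S$, and since the color classes cover all edges of $G_S$, the associated partial transversals have union equal to $S$, exhibiting $S$ as a union of at most $n$ partial transversals.

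I expect the argument to be essentially routine, so the only real point requiring care is the exact matching of definitions, namely the observation that a set of edges sharing no endpoint translates under the bijection into a set of points sharing no row and no column. This is the heart of the promised combinatorial equivalence between edge colorings and decompositions into partial transversals, and once it is set up cleanly the statement follows immediately; the remaining work is merely the bookkeeping needed to present $G_S$ with vertex set $\setN$, as Corollary~\ref{cor:kon1} requires. The same dictionary runs in the reverse direction, so one could equally read off Corollary~\ref{cor:kon1} from Corollary~\ref{cor:kon3}, which is why these are genuinely interchangeable formulations rather than one being strictly weaker than the other.
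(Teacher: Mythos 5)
Your proof is correct and takes essentially the same route as the paper's: both construct the bipartite graph whose edges are exactly the points of $S$ (the paper views the vertex set abstractly as $\{v_i : i \in \setN\} \cup \{w_j : j \in \setN\}$, while you realize it concretely via an even/odd split of $\setN$), apply Corollary~\ref{cor:kon1} to get an edge coloring with at most $n$ colors, and observe that each color class is a partial transversal. If anything, your explicit encoding is slightly more careful about the formal requirement that the vertex set literally be $\setN$.
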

\begin{proof}
Given the set $S$, make a bipartite graph $G$ as follows. The vertex set $\setN$ will be viewed abstractly as $\{v_i \colon i \in \setN\} \cup \{w_j \colon j \in \setN\}$.  Put an edge between $v_i$ and $w_j$ if and only if the point $(i,j)$ is in~$S$. This gives a bipartite graph by construction, The degree of a vertex $v_i$ will be exactly
the number of points in row~$i$, and the degree of a vertex $w_j$ will be exactly the number of points in column~$j$. 

By Corollary~\ref{cor:kon1}, we can find a edge coloring of $G$. We then partition $S$ into one partial transversal for each color. Because the coloring is proper, each block of the partition will have at most one point in each row and in each column. 
\end{proof}

A converse version of Corollary~\ref{cor:kon2} also holds. Assume that, whenever $S$ is a subset of $\setN \times \setN$ with no more than $n$ points in each row and each column, there is a partition of $S$ into $n$ partial transversals. Suppose we are given a countably infinite simple bipartite graph $G$ with maximum degree $n$.  Write the two sets of vertices of $G$ as
$L = \{ v_i : i \in \setN\}$ and $R = \{w_j : i \in \setN\}$. Make a set $S \subseteq \setN \times \setN$ by putting $(i,j)$ in $S$ if there
is an edge from $v_i$ to~$w_j$ in~$G$, so the points of $S$ correspond precisely to the edges of~$G$. Then $S$ has no more than $n$ points in each row and column, so we can decompose $S$ into $n$ partial transversals. If we color the edges using these transversals, we have a edge coloring of $G$. 

\section{Computability and Reverse Mathematics}\label{sec:comp}

In this section, we examine the computability and Reverse Mathematics strength of the results of the previous section.  We view countable graphs as having vertex set $V = \setN$, so they are presented as an adjacency function $E\colon V \times V \to \{0,1\}$.  An object is computable from the graph if it is computable from the function~$E$.  A graph is \emph{bounded} if there is a function $f\colon \setN \to \setN$ so
that whenever there is an edge between vertices $v$ and $w$, we have $w \leq f(v)$. This is equivalent to
the existence of a function that tells the exact degree of each vertex, and thus every regular graph is
bounded.  In the literature, a computable graph $G$ is \emph{highly computable} if it is bounded via some computable function~$f$. 

\begin{proposition}\label{prop:2nminus1}
Suppose that $n \in \setN$ and $G$ is a countable graph with vertex set $\setN$ in which each  vertex has degree less than or equal to $n$. Then there is an edge coloring of $G$ with  no more than $2n-1$ colors, and the coloring is computable from~$G$.  
\end{proposition}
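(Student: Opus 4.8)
The plan is to color the edges greedily in a fixed computable enumeration order and to verify, using the degree bound, that a color is always available. Since $G$ is presented by its adjacency function $E$, I can computably enumerate the edges of $G$ as the set of pairs $(v,w)$ with $v < w$ and $E(v,w)=1$, listed in the natural order on $\setN \times \setN$. Write $e_0, e_1, e_2, \ldots$ for this enumeration.

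I would then define the coloring $c$ by recursion along this enumeration. To color the edge $e_k = \{v,w\}$, I examine the finitely many edges $e_0, \ldots, e_{k-1}$ that have already been colored, select those incident to $v$ or to $w$, collect their colors, and assign to $e_k$ the least element of $\{1, \ldots, 2n-1\}$ not appearing among those colors. Each such step is a search over a finite initial segment of the enumeration, so $c$ is computable from $E$; formalized in $\RCAo$, the coloring is defined by primitive recursion with a $\Delta^0_1$ step function, and its totality is verified using $\Sigma^0_1$ induction.

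The key step is to check that a color is always available. When coloring $e_k = \{v,w\}$, each edge incident to $v$ other than $e_k$ accounts for one of the at most $n-1$ remaining edges at $v$, by the degree bound; likewise at $w$. Hence $e_k$ is adjacent to at most $(n-1)+(n-1) = 2n-2$ other edges, so at most $2n-2$ colors are forbidden and at least one of the $2n-1$ colors remains. This counting is a bounded computation and is provable in $\RCAo$. Because each edge receives a color differing from those of all its previously-colored neighbors, and each pair of adjacent edges is compared when the later of the two is colored, the resulting $c$ is a proper edge coloring.

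The main point requiring care is that the construction must remain computable even though $G$ need not be highly computable: we are given only the uniform bound $n$, not a computable function returning the exact degree of each vertex, so we cannot in general search for all neighbors of a vertex and halt. The greedy procedure sidesteps this difficulty entirely, since coloring $e_k$ only requires inspecting the finite list $e_0, \ldots, e_{k-1}$ and never requires deciding when the search for neighbors around a vertex is complete.
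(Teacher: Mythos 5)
Your proof is correct and takes essentially the same approach as the paper's: a greedy coloring along a computable enumeration of the edges, with the observation that at most $(n-1)+(n-1) = 2n-2$ colors are forbidden at each step, so one of the $2n-1$ colors is always available. Your added remarks---that the argument formalizes in $\RCAo$ and that the greedy procedure never needs to decide when the neighborhood of a vertex has been exhausted, so boundedness of $G$ is not required---are accurate refinements of the same argument.
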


\begin{proof}
Fix an enumeration $(e_i : i \in \setN)$ of the edges of $G$.  The computation proceeds in stages. 
At stage $i$ we determine a color for edge~$e_i$. Consider the vertices $v$ and $w$ that are incident to~$e_i$. Each of these is incident to at most $n-1$ other edges, some of which may have been colored at earlier stages. This means at most $2n-2$ colors are forbidden for~$e_i$ because they are already used at those two vertices. Therefore, because $2n-1$ colors are available, we can select a color for~$e_i$ that is not forbidden. 
\end{proof}

The previous proof can be formalized in $\RCAo$ to yield the following corollary. 

\begin{corollary} $\RCAo$ proves that if $G$ is a countable graph in which each vertex
has degree bounded by $n$ then $\chi'(G) \leq 2n-1$.
\end{corollary}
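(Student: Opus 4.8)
The plan is to formalize the construction in Proposition~\ref{prop:2nminus1} inside $\RCAo$, verifying that each step uses only the principles available in that system. The proof of the proposition is essentially a greedy coloring algorithm: we enumerate the edges, and at each stage we assign to the current edge the least color in $\{1, \dots, 2n-1\}$ that is not already used at either of its endpoints. The key observation is that this is a primitive recursive construction relative to the graph $G$ (that is, relative to the adjacency function $E$), so the resulting coloring exists as a set by $\Delta^0_1$ comprehension, which is exactly what $\RCAo$ provides.

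First I would set up the formalization carefully. Since $G$ has vertex set $\setN$ and is presented by its adjacency function $E\colon \setN \times \setN \to \{0,1\}$, the set of edges is a $\Delta^0_1$-definable subset of $\setN \times \setN$ (coding each edge as an ordered pair, or as an unordered pair via a standard pairing), and so $\RCAo$ proves this set exists and can fix a canonical enumeration $(e_i : i \in \setN)$ of it. The degree bound hypothesis, that each vertex has degree at most $n$, must be stated as a $\Pi^0_1$ (or $\Delta^0_1$, given boundedness) condition; I would state it as the assumption that for every vertex $v$ there are at most $n$ vertices $w$ with $E(v,w) = 1$. The important point is that this bound is uniform in the fixed standard $n$, so it does not require any unbounded search.

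Next I would define the coloring function $c$ by primitive recursion on the stage $i$: having already defined the colors $c(e_0), \dots, c(e_{i-1})$, the color $c(e_i)$ is the least element of $\{1, \dots, 2n-1\}$ that differs from $c(e_k)$ for every $k < i$ such that $e_k$ shares an endpoint with $e_i$. Since at most $2n-2$ colors are forbidden (at most $n-1$ from each of the two endpoints of $e_i$, as each endpoint meets at most $n-1$ edges other than $e_i$), this least color exists, and the search is over the bounded finite set $\{1, \dots, 2n-1\}$. The existence of the function $c$ as a set follows from the recursion theorem for primitive recursive definitions, which is available in $\RCAo$, together with $\Delta^0_1$ comprehension; one checks that the graph of $c$ is $\Delta^0_1$ definable because each value is determined by a bounded computation from finitely many earlier values. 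Finally I would verify, by $\Sigma^0_1$ induction (also available in $\RCAo$), that $c$ is a proper edge coloring: for any two edges $e_i$ and $e_j$ with $i < j$ sharing a common vertex, the construction at stage $j$ explicitly avoided the color $c(e_i)$, so $c(e_i) \neq c(e_j)$.

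The main obstacle I anticipate is not any single hard step but rather the bookkeeping needed to confirm that the recursive definition is genuinely $\Delta^0_1$ and that the induction verifying properness is of the right complexity to be available in $\RCAo$. The statement ``$c$ is a proper coloring'' is $\Pi^0_1$ (for all pairs of adjacent edges, their colors differ), and establishing it amounts to observing that at each finite stage the choice respects all previously imposed constraints; since the check at stage $j$ involves only the finitely many earlier edges adjacent to $e_j$, this is a bounded verification that $\RCAo$ handles without difficulty. The only genuinely delicate point is ensuring the degree bound is invoked correctly so that the pool of $2n-1$ colors always suffices, but because $n$ is a fixed standard parameter and the bound $2n-2 < 2n-1$ is a concrete numerical inequality, this reduces to the combinatorial counting already carried out in the proof of Proposition~\ref{prop:2nminus1}, now read inside the formal system.
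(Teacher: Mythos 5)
Your proposal is correct and takes essentially the same approach as the paper, whose proof simply observes that the greedy construction of Proposition~\ref{prop:2nminus1} formalizes in $\RCAo$ via a $\Delta^0_1$ recursion. One small caveat: in the formal statement $n$ is a number variable of the system (possibly nonstandard) rather than a fixed standard parameter, but this is harmless because your recursion and the counting bound $2n-2 < 2n-1$ are uniform in~$n$.
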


Our next goal is to show that $\WKLo$ is required to reduce the $2n-1$ in the previous corollary to~$n$.
The proof of the following proposition is relatively routine. 

\begin{proposition}[Formalized K\"{o}nig's edge coloring theorem]\label{prop:klforward}
 $\WKLo$ proves that if $G$ is a countable bipartite graph in which each vertex
has degree bounded by $n$ then $\chi'(G)\leq n$.
\end{proposition}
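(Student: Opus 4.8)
The plan is to use a compactness argument, presenting the collection of $n$-edge-colorings of $G$ as the set of paths through a bounded tree and invoking the form of K\"onig's lemma available in $\WKLo$. First I would fix an enumeration $(e_i : i \in \setN)$ of the edges of $G$; since $G$ is presented by its adjacency function, this set of edges is available in $\RCAo$. I would then define a tree $T \subseteq \{1,\ldots,n\}^{<\setN}$ by placing a string $\sigma$ of length $m$ into $T$ exactly when the assignment $e_i \mapsto \sigma(i)$, for $i < m$, is a proper coloring of the finite subgraph $G_m$ spanned by $e_0,\ldots,e_{m-1}$ --- that is, when no two of these edges sharing a vertex receive the same color. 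Checking this condition is primitive recursive in the adjacency function, so $T$ exists by $\Delta^0_1$ comprehension, and it is closed under initial segments because any proper coloring restricts to a proper coloring of a subgraph.

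The crux is to verify that $T$ is infinite, i.e.\ that it contains a string of each length $m$. Here I would appeal to the finite case, Theorem~\ref{thm:kon}: each $G_m$ is a finite bipartite graph with maximum degree at most $n$ (degrees in a subgraph do not exceed degrees in $G$), so it admits a proper edge coloring with $n$ colors. Since $G_m$ is finite, its bipartiteness and such a coloring can be exhibited within $\RCAo$, so the corresponding string of length $m$ lies in $T$. Thus $T$ has a node at every level and is therefore infinite.

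Finally, $T$ is a subtree of $\{1,\ldots,n\}^{<\setN}$, hence bounded. $\WKLo$ proves the bounded form of K\"onig's lemma for finitely branching trees, so $T$ has an infinite path $f \colon \setN \to \{1,\ldots,n\}$. By construction, every initial segment of $f$ is a proper coloring of the corresponding finite subgraph, so $f$ itself assigns to the edges of $G$ a proper coloring with at most $n$ colors. This witnesses $\chi'(G) \leq n$.

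I expect the main obstacle to be the careful handling of the finite theorem inside $\RCAo$: I must ensure that Theorem~\ref{thm:kon}, a statement about finite objects, is genuinely available as a lemma usable at each level $m$ --- in particular that the bipartition of each $G_m$ can be recovered effectively --- so that the infinitude of $T$ is established without smuggling in additional set-existence strength. The passage from the bounded infinite tree to a path is then a routine application of the compactness content of $\WKLo$.
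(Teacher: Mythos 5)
Your proposal is correct and takes essentially the same approach as the paper's own proof: formalize the finite case of K\"onig's theorem in $\RCAo$, define the tree of partial proper colorings of the finite subgraphs determined by an enumeration of the edges, verify the tree is infinite via the finite theorem, and extract an infinite path using bounded K\"onig's lemma in $\WKLo$. The only minor detail the paper handles that you elide is the case where $G$ has only finitely many edges, which is dispatched separately before fixing the (infinite) enumeration.
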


\begin{proof}
The first step is for finite graphs. The proof of K\"{o}nig's edge coloring theorem in the finite case, as presented in Diestel~\cite{Diestel00}, can be directly formalized into $\RCAo$. 

To extend the result to countably infinite graphs, we apply a compactness argument. Let $G$ be a countably infinite graph
with degree bounded by~$n$. If $G$ has a finite number of edges, we can immediately reduce
to the finite case. Therefore, let the set of edges be enumerated in an effective way as $(e(0), e(1), e(2), \ldots)$. Consider the set $T \subseteq [n]^{<\setN}$ of all finite sequences $\sigma$ of $\{1, \ldots, n\}$
that are valid edge colorings of $G$ restricted to the edges in the set $\{e(0), \ldots, e(\operatorname{lh}(\sigma)-1)\}$.  

The set $T$ is a tree and, by the finite case, has at least one sequence of length $k$ for each~$k$.  
Therefore, by bounded K\"{o}nig's lemma, $T$ has an infinite path. This path gives an edge coloring of $G$ with no more than $n$ colors.
\end{proof}

A similar proof method allows us to prove a formalized version of Vizing's theorem in~$\WKLo$.

\begin{proposition}[Formalized Vizing's theorem]
$\WKLo$ proves that if $G$ is a countable graph with maximal degree $n$ then
$\chi'(G) \leq n+1$.
\end{proposition}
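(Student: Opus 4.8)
The plan is to follow the proof of Proposition~\ref{prop:klforward} almost verbatim, making only two changes: we invoke the finite case of Vizing's theorem in place of the finite case of K\"onig's edge coloring theorem, and we work with $n+1$ colors rather than~$n$. As before, the argument splits into a finite part, carried out in $\RCAo$, and a compactness part that uses bounded K\"onig's lemma and hence~$\WKLo$. I should note at the outset that the bound $\chi'(G)\leq n+1$ is the simple-graph form of Vizing's theorem, so the finite case I invoke below is understood for simple graphs.

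For the compactness step, fix an effective enumeration $(e(0), e(1), e(2), \ldots)$ of the edges of~$G$; the edge relation is decidable from the adjacency function $E$, so this enumeration is available in $\RCAo$. If $G$ has only finitely many edges we reduce immediately to the finite case, so assume the enumeration is infinite. Let $T \subseteq [n+1]^{<\setN}$ consist of all finite sequences $\sigma$ over $\{1, \ldots, n+1\}$ that are proper edge colorings of the subgraph on the edges $\{e(0), \ldots, e(\operatorname{lh}(\sigma)-1)\}$. Membership in $T$ is decidable from $E$, so $T$ exists in $\RCAo$, and $T$ is closed under initial segments, hence is a tree. Each finite subgraph $G_k$ induced by the first $k$ edges has maximum degree at most~$n$, so the finite case supplies a coloring of $G_k$ with at most $n+1$ colors, which is a node of $T$ of length~$k$. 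Thus $T$ is infinite and bounded by the constant $n+1$, and bounded K\"onig's lemma yields an infinite path, which is an edge coloring of $G$ with at most $n+1$ colors.

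The substantive work, and the \emph{main obstacle}, is the finite case: $\RCAo$ must prove that every finite simple graph of maximum degree at most~$n$ admits an edge coloring with at most $n+1$ colors. Unlike the bipartite situation, where a free common color or a short alternating-path swap always suffices, Vizing's theorem requires the fan recoloring argument. One processes the edges one at a time, and when the endpoints of a new edge~$uv$ share no free color, one builds a Vizing fan at~$u$ and resolves the conflict by interchanging colors along a Kempe chain, that is, a maximal two-colored alternating path. The point to verify is that this procedure is entirely finitary: the fan and the Kempe chain are finite sequences whose length is bounded by the size of the graph, and both their construction and the check that the resulting coloring is proper are primitive recursive in the coded finite graph. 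Correctness and termination are then established by induction on the number of edges, an induction over a bounded predicate that lies well within the $\Sigma^0_1$ induction available in $\RCAo$. Since every object manipulated is a finite coded graph or sequence, no set existence beyond $\Delta^0_1$ comprehension is needed, so the finite argument formalizes in $\RCAo$; the only delicate part is the bookkeeping of the fan and the chain, which is more intricate than the corresponding step for K\"onig's theorem but introduces no new logical strength.
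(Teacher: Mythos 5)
Your proposal takes exactly the same route as the paper: the paper's own proof is a two-sentence sketch stating that the finite case of Vizing's theorem (the fan/recoloring induction) formalizes in $\RCAo$ and that the countable case then follows by the same compactness argument used for K\"onig's theorem, which is precisely the tree-plus-bounded-K\"onig's-lemma argument you spell out. Your elaborations --- the explicit tree $T \subseteq [n+1]^{<\setN}$, the $\Sigma^0_1$-induction remark for the finite case, and the caveat that the $n+1$ bound is the simple-graph form of Vizing's theorem --- are faithful fillings-in of that same outline, so the proposal is correct and essentially identical in approach.
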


\begin{proof}
The method is similar to the previous proof. The finite version of Vizing's theorem
uses an inductive argument that can be formalized in~$\RCAo$. The countable version
can then be obtained in $\WKLo$ using compactness.
\end{proof}

We now show that $\WKLo$ is required for the conclusion of Corollary~\ref{thm:kon}.
%\newpage 

\begin{theorem}\label{thm:1}
The following are equivalent over $\RCAo$:
\begin{enumerate}
\item $\WKLo$.
\item If $G$ is a countable bipartite graph in which each vertex
has degree bounded by $n$ then $\chi'(G) \leq n$.
\item If $G$ is a countable bounded bipartite graph in which each vertex
has degree bounded by $2$ then $\chi'(G) \leq 2$.
\end{enumerate}
\end{theorem}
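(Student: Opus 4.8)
The plan is to prove the theorem by establishing the cycle of implications $(1) \Rightarrow (2) \Rightarrow (3) \Rightarrow (1)$. The implication $(1) \Rightarrow (2)$ is already in hand: it is exactly Proposition~\ref{prop:klforward}. The implication $(2) \Rightarrow (3)$ is routine, since statement (3) is obtained from the $n = 2$ instance of statement (2) by \emph{adding} the hypothesis that $G$ is bounded. Restricting the class of graphs to which the principle is applied can only weaken it, so (2) yields (3) with no extra work. All of the content therefore lies in the reversal $(3) \Rightarrow (1)$.

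For the reversal I would reason in $\RCAo$ and derive the $\Sigma^0_1$ separation principle, which is equivalent to $\WKLo$ over $\RCAo$ (Simpson~\cite{Simpson09}). So let $A = \{x : \exists s\, R(x,s)\}$ and $B = \{x : \exists s\, S(x,s)\}$ be disjoint sets given by $\Sigma^0_1$ formulas; I want to produce a set that contains $B$ and is disjoint from $A$. The idea is to build, uniformly from the two enumerations, a bounded bipartite graph $G$ of maximum degree $2$ whose \emph{every} proper $2$-edge-coloring encodes such a separating set. Because a graph of maximum degree $2$ is a disjoint union of paths and even cycles, a $2$-coloring simply alternates the two colors along each component, and the only freedom is one binary choice of ``phase'' per component. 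I would exploit this by maintaining one distinguished path, the \emph{spine} $\mathcal{M}$, with a fixed origin edge $e_o$, together with an auxiliary path $Q_x$ carrying a fixed checkpoint edge $d_x$ for each $x$. The spine and each $Q_x$ are grown one fresh tip vertex per stage, so that no vertex's set of incident edges is ever edited after the vertex is introduced; this keeps the edge relation decidable and makes $G$ a computable, bounded, bipartite graph of degree $2$. When $x$ is enumerated into $A$ or into $B$ at some stage, I splice $Q_x$ onto the current tip of the spine by a short connector whose \emph{length parity} I choose, knowing which of the two sets $x$ has entered, so as to fix the distance from $e_o$ to $d_x$ to the desired parity. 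Since $A \cap B = \varnothing$, each $Q_x$ is spliced at most once; if $x$ is never enumerated, $Q_x$ remains a separate ray. Given any $2$-coloring $c$ of $G$, I then set $b_x = 1$ if $c(d_x) = c(e_o)$ and $b_x = 0$ otherwise, and take $X = \{x : b_x = 1\}$. For a spliced $x$, the edges $e_o$ and $d_x$ lie in the same component at a controlled parity, so $b_x$ is forced and encodes whether $x \in A$ or $x \in B$; for an unspliced $x$ (which lies in neither set) the value of $b_x$ is unconstrained, which is harmless. A parity bookkeeping check shows $A \cap X = \varnothing$ and $B \subseteq X$, so $X$ separates $A$ and $B$.

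The main obstacle I anticipate is reconciling three competing demands in the construction of $G$: it must be genuinely computable (hence have a \emph{decidable}, not merely $\Sigma^0_1$, edge relation) and bounded, yet its colorings must be forced by events—membership in $A$ or $B$—that are only $\Sigma^0_1$. The degree-$2$ restriction sharpens this difficulty, since it forbids hanging gadgets off a vertex and leaves only the splicing of free path-ends as a mechanism. The technique of always extending at fresh tips, and of forcing each bit by comparing $c(d_x)$ to $c(e_o)$ \emph{within one spliced component} (rather than trying to pin down an absolute color on an isolated path, which is impossible), is what resolves the tension. Carrying out the parity bookkeeping, verifying bipartiteness and the bounding function, and confirming that the whole construction and the passage from $c$ to $X$ go through by $\Delta^0_1$ comprehension in $\RCAo$ are the remaining routine steps.
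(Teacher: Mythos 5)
Your skeleton matches the paper's: (1)$\Rightarrow$(2) is Proposition~\ref{prop:klforward}, (2)$\Rightarrow$(3) is immediate, and the reversal derives a separation principle equivalent to $\WKLo$ by exploiting the fact that in a graph of maximum degree~$2$ a proper $2$-edge-coloring must alternate along each path, so the parity of the distance between two designated edges forces their colors to agree or disagree. That parity mechanism is exactly the paper's. Where you differ is the architecture, and that is where there is a genuine gap. You compare every checkpoint edge $d_x$ against one global origin edge $e_o$, which requires splicing infinitely many paths $Q_x$ into the single component containing $e_o$. Each splice must attach a connector at a degree-$1$ vertex of that component (the degree bound forbids anything else), and to preserve boundedness that attachment vertex must be \emph{fresh}, i.e.\ created within a bounded window of the current stage: a bounding function cannot anticipate the $\Sigma^0_1$-unpredictable stage at which $x$ is enumerated, and this is precisely the obstruction that the paper's ``staircase'' modification exists to solve. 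But under your description ($Q_x$ a ray with $d_x$ at its fixed origin, spliced at its tip onto the spine's tip), after the \emph{first} splice the component of $e_o$ has exactly two ends --- the spine's origin and $Q_x$'s origin --- and both are old vertices. A second splice is then impossible: attaching at an old end makes the graph unprovably bounded, so hypothesis (3) no longer applies, while attaching anywhere else violates the degree bound. Your guiding invariant that ``no vertex's set of incident edges is ever edited after the vertex is introduced'' is also literally unachievable for a growing path (every tip receives its second edge a stage later); the invariant you actually need is that edges only ever join vertices created within a fixed number of stages of one another. As written, your construction encodes one bit and then stalls.

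The gap is repairable while keeping your single-spine design: let each $Q_x$ grow at \emph{both} ends with $d_x$ in its interior; when $x$ is enumerated, splice one fresh end of $Q_x$ (through a connector of the parity you need) onto the spine's current fresh tip, and decree that $Q_x$'s other growing end is now the spine's tip. Then every splice joins two fresh tips, a fresh end always survives for the next splice, and boundedness, the degree bound, bipartiteness, and the parity bookkeeping all go through in $\RCAo$. The paper instead sidesteps iterated splicing entirely: it builds an independent gadget $S_k$ for each $k$, consisting of three ``staircases'' growing in lockstep, reads the bit \emph{locally} (whether $e^k_1$ and $e^k_3$ receive the same color), and therefore needs at most one cap per component, placed between two tips whose possible locations are computably predictable; the gadgets are kept in disjoint rows and columns via prime powers. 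Either repair yields a correct proof, but the repair is the crux, not a routine detail.
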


\begin{proof}
Proposition~\ref{prop:klforward} shows that (1) implies (2), and (2) directly implies~(3). Therefore, 
we show that (3) implies~(1).

We assume (3). To show that $\WKLo$ holds, it is sufficient to show that if we have
two bijections $f, g \colon \setN \to \setN$ with disjoint ranges, there is a separating set $A$
that contains the range of $f$ and is disjoint from the range of~$g$.  

Temporarily fix $k \in \setN$. We first show how to construct a subset $S_k$ of $\setN \times \setN$
so that:
\begin{enumerate}
\item There are at most two points in $S_k$ in each row and column.
\item Any partition of $S_k$ into two partial transversals will allow us to tell whether
to place $k$ into the set $A$.
\end{enumerate}
We begin by placing a point $e^k_1$ at $(0,0)$ and points $e^k_2$ and $e^k_3$ at $(1,1)$ and $(1,2)$, respectively.  If $k$ never enters the 
range of $f$ or $g$, then $S_k$ will be the set $\{e^k_1, e^k_2, e^k_3\}$.  If we see that $f(q) = k$, 
we place a point $e^k_4$ at $(q+2, 0)$ and a point $e^k_5$ at $(q+2, 1)$.   If we see that $g(q) = k$,
we place $e^k_4$ at $(q+2, 0)$ and place $e_5$ at $(q+2, 2)$.  This construction is illustrated in Figure~\ref{fig:fig1}.  The set $S_k$ is uniformly computable relative $k$, $f$ and $g$.  The
graph corresponding to $S_k$ has one vertex for each row and one vertex for each column of $\setN \times \setN$, and as such the
graph is trivially bipartite.

\begin{figure}
\begin{center}
\begin{tabular}{ccc}
  \begin{tikzpicture}[yscale=0.75]
    \tikzstyle{dot}=[draw,circle,inner sep=2pt,fill]
    \node[dot] (A) at (0,0) {};
    \node[yshift=-3mm] at (A.south) {$e^k_1$};
    \node[dot] (B) at (4,0) {};
    \node[xshift=3mm] at (B.east) {$e^k_4$};
    \node[dot] (C) at (4,1) {};
    \node[xshift=3mm] at (C.east) {$e^k_5$};
    \node[dot] (D) at (2,1) {};
    \node[xshift=-3mm] at (D.west) {$e^k_2$};
    \node[dot] (E) at (2,2) {};
    \node[xshift=-3mm] at (E.west) {$e^k_3$};
    
    \draw (A)--(B)--(C)--(D);
	\draw (D)--(E);
  \end{tikzpicture}&
  \qquad &
    \begin{tikzpicture}[yscale=0.75]

    \tikzstyle{dot}=[draw,circle,inner sep=2pt,fill]

    \node[dot] (A) at (0,0) {};
    \node[yshift=-3mm] at (A.south) {$e^k_1$};
    \node[dot] (B) at (4,0) {};
    \node[xshift=3mm] at (B.east) {$e^k_4$};
    \node[dot] (C) at (4,2) {};
    \node[xshift=3mm] at (C.east) {$e^k_5$};
    \node[dot] (D) at (2,1) {};
    \node[xshift=-3mm] at (D.west) {$e^k_2$};
    \node[dot] (E) at (2,2) {};
    \node[xshift=-3mm] at (E.west) {$e^k_3$};
    
    \draw (A)--(B)--(C)--(E);
	\draw (D)--(E);
  \end{tikzpicture}\\
  Graph (i) & & Graph (ii)\\
  \end{tabular}
  \caption{Construction with $S_k = \{e^k_1, e^k_2, \ldots, e^k_5\}$. The edges in the graph shown illustrate which points are in the same row or column.  We will partition $S_k$ into two partial transversals, which is the same as $2$-coloring the vertices of the graph shown. In graph~(i), $e^k_1$ and $e^k_3$ must be in the same partial transversal. In graph~(ii), $e^k_1$ and $e^k_3$ must be in different transversals.}\label{fig:fig1}
  \end{center}
\end{figure}

Suppose that we partition $S_k$ into two partial transversals. By a parity argument, we see
that if $k$ is in the range of $f$ then $e^k_1$ and $e^k_3$ must be in the same block, while
if $k$ is in the range of $g$ then $e^k_1$ and $e^k_3$ must be in different blocks. If $k$ is not 
in the range of $f$ or the range of $g$, then we never add $e^k_4$ or $e^k_5$ to $S$, and
$e^k_3$ can be in either block.

\begin{figure}
\begin{center}
  \begin{tikzpicture}[yscale=0.5]
    \tikzstyle{dot}=[draw,circle,inner sep=2pt,fill]
    \node[dot] (A) at (0,0) {};
    \node[yshift=-3mm] at (A.south) {$e^k_1$};

    \node[dot] (A1) at (2,0) {};
    \node[xshift=4mm] at (A1.east) {$e^k_{1,1}$};

    \node[dot] (A2) at (2,3) {};
    \node[xshift=-4mm,yshift=2mm] at (A2.north west) {$e^k_{1,2}$};

    \node[dot] (A3) at (5,3) {};
    \node[xshift=4mm] at (A3.east) {$e^k_{1,3}$};

    \node[dot] (A4) at (5,7) {};  % should be 6
    \node[xshift=-4mm,yshift=2mm] at (A4.north west) {$e^k_{1,4}$};

% x 2 5 8 = 3q-1
% y 3 6 9  = 3q
% q 1 2 3 

    \draw (A)--(A1)--(A2)--(A3)--(A4);

    \node[dot] (B) at (1,1) {};
    \node[xshift=-3mm] at (B.west) {$e^k_2$};

    \node[dot] (B1) at (3,1) {};
    \node[xshift=4mm] at (B1.east) {$e^k_{2,1}$};

    \node[dot] (B2) at (3,4) {};
    \node[xshift=-4mm,yshift=2mm]at (B2.north west) {$e^k_{2,2}$};

    \node[dot] (B3) at (6,4) {};
    \node[xshift=4mm] at (B3.east) {$e^k_{2,3}$};

    \node[dot] (B4) at (6,7) {};
    \node[yshift=2mm,xshift=4mm]at (B4.north east) {$e^k_{2,4}$};

% q 1 2 3 
% x 3 6 9 = 3q
% y 1 4 7. = 3q+1
       
    \draw (B)--(B1)--(B2)--(B3)--(B4);

    \node[dot] (C) at (1,2) {};
    \node[xshift=-3mm,yshift=2mm] at (C.north west) {$e^k_3$};

    \node[dot] (C1) at (4,2) {};
    \node[xshift=4mm] at (C1.east) {$e^k_{3,1}$};

    \node[dot] (C2) at (4,5) {};
         \node[xshift=-4mm,yshift=2mm] at (C2.north west) {$e^k_{3,2}$};
  
    \node[dot] (C3) at (7,5) {};
    \node[xshift=4mm] at (C3.east) {$e^k_{3,3}$};

    \node[dot] (C4) at (7,8) {};
         \node[xshift=4mm,yshift=2mm] at (C4.north east) {$e^k_{3,4}$};
% q 1 2 3 
% x 1 4. 7.  = 3q+1
% y 2. 5 8 = 3q+2
       
    \draw (C)--(C1)--(C2)--(C3)--(C4);

\draw[line width=1.5pt] (A4)--(B4);

\draw (B)--(C);
\end{tikzpicture}

  \caption{Modified construction with ``staircases''.  If we $2$-color the vertices of the graph,
  $e^k_{1}$ and $e^k_{1,2q}$ must have the same color for each~$q$. Similarly $e^k_{2}$ and $e^k_{2,2q}$
  have the same color, and $e^k_3$ and $e^k_{3,2q}$ have the same color. Because $f(1) = 1$,   $e^k_{1,4}$ is placed on the same row as $e^k_{2,4}$ (shown with a heavy line). If $g(1) = 1$, then $e^k_{1,4}$ would be placed  one row higher, on the same row as $e^k_{3,4}$.  Each row and column contains two points except those corresponding to uncapped ends of staircases.}\label{fig:fig2}
  \end{center}
\end{figure}

In order to obtain the desired result, we need to modify the construction so the corresponding bipartite graph is bounded.  In the construction just shown, we cannot predict the column where 
$e^k_4$ might be placed. To handle this, we will add ``staircase'' paths starting at $e^k_1$, $e_k^2$, and
$e^k_3$ as shown in Figure~\ref{fig:fig2}. Place $e^k_1$, $e^k_2$, and $e^k_3$ as before. Now, at each stage $q \geq 0$, if $k$ has not yet
entered the range of $f$ or $g$, we do the following:
\begin{itemize}
\item Place $e^k_{2, 2q+1}$ at $(3q+3, 3q+1)$, which is three columns to the right of $e^k_{2, 2q-1}$ when $q > 0$, or two columns to the right of~$e^k_2$ when $q =0$.
\item Place $e^k_{2,2q+2}$ at $(3q+3,3q+4)$, which is three rows above $e^k_{2,2q+1}$. %
\item Place $(e^k_{3,2q+1})$ at $(3q+4, 3q+2)$, three columns to the right of $e_3$ or $e^k_{3,2q-1}$.
\item Place $e^k_{3,2q+2}$ at $(3q+4,3q+5 )$, three rows above $e^k_{3,2q+1}$. 
\item Place $e^k_{1,2q+1}$ at $(3q-1,3q)$, three columns to the right of $e^k_{1,2q-1}$.
\item If $f(q) = k$, we place $e^k_{1,2q+2}$ at $(3q-1,3q+4)$, above $e^k_{1,2q+1}$ and to the left of $e^k_{2,2q+2}$.  In this case we have ``capped'' $e^k_{1,2q+2}$ and $e^k_{2,2q+2}$. We stop extending the ``stairs'' at future stages. 
\item 
If $g(q) = k$, we place $e^k_{1,2q+2}$ at $(3q-1,3q+5)$, above $e_{1,2q+1}$ and the left of $e^k_{3,2q+2}$.  In this case we have ``capped'' $e^k_{1,2q+2}$ and $e^k_{3,2q+2}$.
In this case we also stop extending the ``stairs''. 
\item 
Otherwise, we place $e^k_{1,2q+2}$ at $(3q-1, 3q+3)$, three rows above $e^k_{1,2q+1}$.
\end{itemize}

The effect of this construction is that, if $S_k$ is partitioned into two partial transversals,
$e^k_1$ must be in the same block as $e^k_{1,2q+2}$ whenever $e^k_{1,2q+2}$ is added to $S_k$. Similarly
$e^k_2$ and $e^k_{2,2q+2}$ must be in the same block as each other, and $e^k_3$ and $e^k_{3,2q+2}$ must be in the same block as each other. 

Thus, if $e^k_{1,2q+2}$ and $e^k_{2,2q+2}$ are on the same row, they must be in different blocks, and thus $e^k_1$ and $e^k_2$ must be in different blocks.  Hence $e^k_1$ and $e^k_3$ must be 
in the same block.  On the other hand if $e^k_{1,2q+2}$ and
$e^k_{3,2q+2}$ are on the same row then $e^k_1$ and $e^k_3$ must be in different blocks.   Thus, as before, we can learn about whether $q$ is in the range of $f$ or $g$ by looking at whether 
$e^k_1$ and $e^k_3$ are in the same block. 

At the same time, because there are only three possible locations for each $e^k_{1,2q+2}$, and we can effectively find  the correct location from $k$, $q$, $f$, and $g$ the resulting bipartite graph is bounded.

Finally, for the full proof, we need to construct a single set $S \subseteq \setN \times \setN$ that contains a copy of each set $S_k$. To do this, for each prime $p$ let 
\[
D_p = \{ (p^{i+1}, p^{j+1}) : i, j \in \setN\} \subseteq \setN \times \setN.
\]
Each set $D_p$ is isomorphic to $\setN \times \setN$ in a very effective way. 
 For each $k$, when
the above construction says to place a point of $S_k$ at location $(i,j)$, we  instead place the point
at location $(p^{i+1}, p^{j+1})$ in $D_{p}$ where $p$ is the $(k+1)$st prime.
We do this simultaneously for all $k$, producing a set $S$ that is a subset of 
$\bigcup_{p \text{ prime}} D_p$. In this way, no point in a set $S_k$ is ever in 
the same row or column as a point of $S_{l}$ when $k \not = l$. Hence each row or column
of $S$ contains at most two points. Moreover, for each $i$ we can effectively enumerate
the finite set of points that could be included in row $i$ or column $i$. This means that
the bipartite graph corresponding to $S$ is bounded.

By assumption,
we can partition $S$ into two partial transversals.  We let $A$ be the set of $k$ so that
$e_1^k$ and $e_3^k$ are in the same block of the partition. Then $A$ will be the desired separating set. By construction, if $k$ is in the range of $f$ then $e_1^k$ and $e_3^k$
must be in the same block, while if $k$ is in the range of $g$ then $e_1^k$ and $e_3^k$ 
must be in different blocks.
\end{proof}

The previous proof can be adapted to the problem of decomposing $n$-regular bipartite graphs into complete matchings.  This Reverse Mathematics
result is originally due to Hirst~\cite{Hirst90}; also see Shafer~\cite[Theorem~6.1.8]{Shafer11}.

\begin{theorem}\label{thm:match}
The following are equivalent over $\RCAo$:
\begin{enumerate}
\item $\WKLo$.
\item If $G$ is a countable regular bipartite graph with degree $n$, there is a decomposition of the
edges of $G$ into $n$ complete matchings. 
\item If $G$ is a countable $2$-regular bipartite graph, there is a decomposition of the edges of $G$ into $2$ complete matchings. 
\end{enumerate}
\end{theorem}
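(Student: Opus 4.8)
The plan is to mirror the structure of the proof of Theorem~\ref{thm:1} as closely as possible. The implication from (1) to (2) should follow the same compactness template as Proposition~\ref{prop:klforward}: the finite version (Corollary~\ref{cor:kon2}, which yields $n$ \emph{complete} matchings in the $n$-regular case) formalizes in $\RCAo$, and then bounded K\"{o}nig's lemma over $\WKLo$ passes to the countable graph, noting that a regular graph is automatically bounded. The key point to check is that the tree of partial colorings now lives inside the space of colorings that respect the regularity condition, so that each path yields a decomposition into genuinely \emph{complete} matchings rather than merely partial ones; in the $n$-regular case a proper edge coloring with $n$ colors forces each color class to saturate every vertex, so this is automatic. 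The implication from (2) to (3) is immediate by specializing to $n = 2$. Thus the work, as before, is to show that (3) implies (1).

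To prove (3) implies (1), I would reuse the separating-set reduction from the proof of Theorem~\ref{thm:1} essentially verbatim: given bijections $f, g \colon \setN \to \setN$ with disjoint ranges, build gadgets $S_k$ out of which a $2$-coloring reveals whether $k$ lies in the range of $f$ or of $g$. The difficulty is that Theorem~\ref{thm:match} speaks of \emph{$2$-regular} graphs and decompositions into \emph{complete} matchings, whereas the gadgets of Theorem~\ref{thm:1} produce graphs with vertices of degree~$1$ (the uncapped staircase ends and the isolated-looking endpoints), which are not regular. The main obstacle is therefore to modify the construction so that every vertex has degree exactly~$2$ while preserving the coding, so that a decomposition into two complete matchings still lets us read off the separating set.

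The natural fix is to close up the dangling ends of each gadget. In the staircase construction of Figure~\ref{fig:fig2}, each staircase terminates either at a cap (when $k$ enters the range of $f$ or $g$) or, if $k$ never does, runs off to infinity. To make the corresponding graph $2$-regular I would pair up the degree-one endpoints and connect them, routing the connecting paths through the same prime-indexed copies $D_p$ used to keep the gadgets $S_k$ row- and column-disjoint, so that no new row or column acquires more than two points and the graph stays bounded. For the gadgets that never cap (when $k \notin \operatorname{ran}(f) \cup \operatorname{ran}(g)$) the staircases are infinite and already have every vertex of degree~$2$ except their starting points, so I would link the three starting points $e^k_1, e^k_2, e^k_3$ into the remaining low-degree structure to raise them to degree~$2$ as well. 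Once every vertex has degree exactly~$2$, a decomposition into two complete matchings is the same data as a proper $2$-edge-coloring in which both color classes saturate every vertex, and the parity argument reading $e^k_1$ against $e^k_3$ goes through unchanged. The delicate part is verifying that these closing-up edges do not create short cycles that force an unwanted relationship between $e^k_1$ and $e^k_3$; I expect this to be the step requiring the most care, and it should be arranged so that any added cycle has even length and threads the relevant vertices so their enforced colors are consistent with the original coding.
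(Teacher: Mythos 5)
Your forward directions match the paper: (1) implies (2) via the observation that in an $n$-regular graph a proper $n$-edge-coloring uses every color at every vertex, so each color class is automatically a complete matching, with the coloring supplied by Theorem~\ref{thm:1} (equivalently, the compactness argument of Proposition~\ref{prop:klforward}). (2) implies (3) is trivial. The gap is in the reversal, at precisely the step you flag as the delicate one: how to make the coding graph $2$-regular.

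Your plan --- pair up degree-one endpoints and connect them, and, for a gadget $S_k$ where $k$ never enters the range of $f$ or $g$, ``link the three starting points $e^k_1, e^k_2, e^k_3$ into the remaining low-degree structure'' --- fails in exactly that never-capped case, for two reasons. First, there is no remaining low-degree structure to link to: $e^k_2$ and $e^k_3$ already have both of their coordinates shared (their common row, and the columns they share with their own staircases), so the only deficient vertex is the free coordinate of $e^k_1$. That is a \emph{single} dangling end; it has no partner inside the gadget, and any finite patch attached to it simply creates a new dangling end, so no finite linking or cycle-closing can repair it --- only an infinite extension can. Second, whether $S_k$ ever caps is a $\Sigma^0_1$ event, so an $\RCAo$-definable construction cannot case-split on ``capped versus never capped'': whatever is done to the dangling ends must be done uniformly, stage by stage, consistently with both eventualities, and a scheme that waits to see which endpoints exist before pairing them is not effective. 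The paper's fix sidesteps both problems and uses no cycles at all: extend the staircase of $e^k_1$ downward and to the left forever, unconditionally, and extend each staircase upward and to the right forever \emph{unless} it is capped. Every component is then a doubly infinite path, every row and column of $S$ acquires exactly two points, the corresponding bipartite graph is $2$-regular and bounded, and the parity argument comparing $e^k_1$ with $e^k_3$ goes through unchanged. Your cycle idea could only be salvaged by adding exactly this kind of unconditional infinite extension as a hedge (closing a component into an even cycle once a cap actually appears), at which point the infinite extensions are doing all the work and the even-length bookkeeping you were worried about never arises.
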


\begin{proof}
In $\WKLo$, Theorem~\ref{thm:1} shows that we can find an edge coloring of $G$ with 
$n$ colors. In a regular graph with degree $n$, every vertex must have an edge of every color. 
Thus each color corresponds to a complete matching. This shows that (1) implies~(2).
Statement (2) directly implies statement~(3).

To show that (3) implies~$\WKLo$ over $\RCAo$, we only need to modify the proof of Theorem~\ref{thm:1} so that the graph is 2-regular. This means that we need to ensure that each row and each column in the construction of $S$ has exactly two points.   In that construction, the only rows or columns that have one point are the endpoints of the staircases. We can modify the construction so that
a staircase that is not capped will extend forever in each direction.  This means that we extend the staircase for $e^k_1$ down and to the left, and we extend each staircase up and to the right unless it is capped.  The resulting set $S$ has exactly two points in each row and each column, so the corresponding bipartite graph is 2-regular. The remainder of the reversal is the same as~Theorem~\ref{thm:1}.
\end{proof}

% The next corollary reduces this upper bound. 
%
%\begin{proposition} Every computable bipartite graph with maximal degree $n = 2m$ has a computable
%edge coloring with $3m$ colors. 
%\end{proposition}
%\begin{proof}
%
%
%\end{proof}

%
%Because $2m + 1 < 2(m+1)$, we can state a trivial corollary when the degree bound is odd. 
%
%\begin{corollary} Every computable bipartite graph with maximal degree $2m+1$ has a computable
%edge coloring with $3m+3$ colors. 
%\end{corollary}
%

A theorem of Kierstead shows that computable edge colorings of highly computable bipartite graphs can be 
obtained with one additional color.  

\begin{theorem}[Kierstead~\cite{Kierstead81}]
Every highly computable graph that has an edge coloring with $k$ colors has a computable edge coloring with $k+1$ colors.
\end{theorem}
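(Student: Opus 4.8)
The plan is to prove a computable version of Vizing's theorem for the bounded-degree graphs at hand. First I would extract the only consequence of the hypothesis that is actually needed: if $G$ has an edge coloring with $k$ colors, then every vertex has degree at most $k$, since the edges meeting a vertex all receive distinct colors. Because $G$ is highly computable, I can compute the exact degree and the complete neighbor list of each vertex, and I can fix a computable enumeration $(e_0, e_1, e_2, \ldots)$ of the edges. The goal is then to build, in stages, a proper coloring $c \colon E(G) \to \{1, \ldots, k+1\}$ that is computable. The point of the extra color is that, since each endpoint of an edge has degree at most $k < k+1$, every vertex always has at least one free color; this is precisely the slack that the $k$-color version lacks, and which Theorem~\ref{thm:1} shows cannot be recovered computably without $\WKLo$.

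The engine is the finite augmenting-path argument behind Vizing's theorem. At stage $n$ I would maintain a proper $(k+1)$-coloring of the finite subgraph colored so far and try to color $e_n = \{u,v\}$. If a single color is free at both $u$ and $v$, I assign it and move on. Otherwise I run the standard finite Vizing step: build a maximal fan of edges at $u$, choose a color $\alpha$ missing at $u$ and a color $\beta$ missing at the tip of the fan, and flip the $\alpha\beta$-Kempe chain. Since $\alpha$ and $\beta$ each form a matching, the $\alpha\beta$-colored subgraph is a disjoint union of simple paths and even cycles, so the chain starting at the relevant vertex is a single finite path, and the resulting rotation frees a common color for $e_n$. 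This case analysis is finite and formalizes readily in $\RCAo$, and each step recolors only finitely many already-colored edges, so every stage is an effective, terminating computation.

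The hard part will be \emph{stabilization}: I must guarantee that each fixed edge $e_i$ is recolored only finitely often, so that $c(e_i) = \lim_s c_s(e_i)$ exists and is uniformly computable, rather than oscillating forever as later Kempe swaps reach back into already-settled regions. This is the crux of Kierstead's argument, and it is where high computability is essential. The idea is to use the degree bound to keep every augmenting structure local, and to process the graph in a layered, priority-controlled order (for instance, by distance from a root within each connected component) so that the recoloring triggered by inserting $e_n$ propagates only through vertices that have not yet been permanently settled, and never returns to the edges of a completed layer. Once one shows that the region affected when coloring $e_n$ lies within a computably bounded neighborhood and that each edge participates in only boundedly many swaps, the per-edge limit exists and is computable, which yields the desired computable $(k+1)$-edge coloring. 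I would expect the careful bookkeeping of this termination argument, rather than any single combinatorial step, to be the main obstacle.
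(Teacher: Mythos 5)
Your opening move gives away too much: you replace the hypothesis that $G$ actually has a $k$-edge coloring by its weaker consequence that every vertex has degree at most $k$, and then set out to prove that every highly computable graph of maximum degree $k$ has a computable $(k+1)$-edge coloring. That statement is not Kierstead's theorem but the computable version of Vizing's theorem, and it is strictly stronger: by Vizing's theorem a graph of maximum degree $k$ has chromatic index $k$ or $k+1$, and for the ``class two'' graphs whose chromatic index is $k+1$ the hypothesis of Kierstead's theorem fails for that value of $k$ while your weakened hypothesis holds. In fact the statement you are attempting to prove is an open problem, posed by Schmerl and restated in the Questions section of this very paper: a computable $n$-regular graph is automatically highly computable, so your theorem would imply that every computable $n$-regular graph has a computable $(n+1)$-edge coloring, which is exactly Schmerl's question. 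The existence of a (possibly noncomputable) $k$-edge coloring is not disposable side information; it is the hypothesis Kierstead's argument actually leverages, and its loss is why the paper can conclude only a computable $(n+2)$-coloring, not an $(n+1)$-coloring, for highly computable graphs of maximum degree~$n$.

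Consistent with this, the part of your plan that would have to do all the work --- stabilization --- is exactly the part you leave as a hope. Local slack at every vertex (degree at most $k$ against $k+1$ available colors) does not tame Kempe chains: an $\alpha\beta$-chain started at a newly inserted edge can run backwards through arbitrarily many previously colored edges, and processing vertices by distance from a root provides no mechanism that stops a chain at the boundary of a ``settled'' layer. Without such a mechanism you get at best a $\Delta^0_2$ limit coloring rather than a computable one, and any mechanism that worked from the degree bound alone would resolve the open question above. If you want an argument within reach, restrict to bipartite graphs: there K\"onig's theorem (Corollary~\ref{cor:kon1}) shows that maximum degree $k$ implies chromatic index at most $k$, so the degree bound really is equivalent to the coloring hypothesis; that is precisely how the paper deduces its corollary for highly computable bipartite graphs from Kierstead's theorem, which the paper itself does not prove but cites from Kierstead's 1981 article.
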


\begin{corollary}
Every highly computable bipartite graph with maximal degree $k$ has a computable edge coloring with $k+1$ colors.
\end{corollary}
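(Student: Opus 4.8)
The plan is to combine K\"onig's edge coloring theorem with Kierstead's theorem, both of which are available from the preceding material, since the conclusion is essentially a formal consequence of the two. First I would observe that the hypothesis of Kierstead's theorem---that the graph be $k$-edge-colorable, that is, that it admit an edge coloring with $k$ colors---is exactly what K\"onig's edge coloring theorem supplies in the bipartite case. Concretely, let $G$ be a highly computable bipartite graph with maximal degree~$k$. By Corollary~\ref{cor:kon1}, the infinitary form of K\"onig's theorem (Theorem~\ref{thm:kon}), the graph $G$ has an edge coloring with $k$ colors. This coloring need not be computable; I only need its existence to witness that $G$ is $k$-edge-colorable.

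Having established that $G$ is a highly computable graph that admits an edge coloring with $k$ colors, I would then apply Kierstead's theorem directly. Kierstead's theorem asserts that every highly computable graph which has an edge coloring with $k$ colors has a \emph{computable} edge coloring with $k+1$ colors. Applying this to $G$ yields the desired computable edge coloring with $k+1$ colors, which completes the argument.

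There is no real obstacle here: the corollary is a one-line deduction from the two cited results, and the only point deserving a moment's care is the division of labor between them. K\"onig's theorem is invoked solely to guarantee the existence of the abstract $k$-coloring that serves as the hypothesis of Kierstead's theorem, while all of the effectiveness---computability at the cost of one extra color---comes from Kierstead's theorem. In particular, bipartiteness of $G$ enters only through K\"onig's theorem, since Kierstead's theorem itself imposes no bipartiteness assumption and applies to arbitrary highly computable $k$-edge-colorable graphs.
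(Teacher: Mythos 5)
Your proposal is correct and matches the paper's intended argument exactly: the corollary is deduced by using the infinitary form of K\"onig's theorem (Corollary~\ref{cor:kon1}) to witness that the bipartite graph is $k$-edge-colorable, and then applying Kierstead's theorem to obtain a computable coloring with $k+1$ colors. The paper leaves this deduction implicit (stating the corollary without proof), and your writeup supplies precisely that one-line argument, including the correct observation that bipartiteness is needed only for the K\"onig step.
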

 
Hence the number of colors permitted in the coloring of a highly computable bipartite graph is a key factor.  When the maximal degree is $k$, a computable coloring with $k+1$ colors always exists,
but not necessarily a computable edge coloring with $k$ colors. 

\section{Questions}

Vizing's theorem shows that a countable graph in which the maximum degree is bounded by 
$n$ must have an edge coloring with at most $n+1$ colors.  It then follows from Kierstead's theorem 
that there must be a computable edge coloring with $n+2$ colors. 

The reversals above rely crucially on the fact that the degree of each vertex is the same
as the number of colors, so every color is used at every vertex. In Vizing's theorem one 
more color is allowed, so no vertex can use every color.  

We verified that Vizing's theorem follows from $\WKLo$ but did not produce a lower bound on the strength. Schmerl~\cite{Schmerl} originally posed this question in 1985 in the context of computable graph theory.

\begin{question}[Schmerl~\cite{Schmerl}]
 Is Vizing's theorem computably true?  In particular, is every computable $n$-regular graph edge colorable with $n+1$ colors? 
 \end{question}

In the same paper, Schmerl gives an example of a computable $3$-regular graph which is not computably edge colorable with $3$~colors.

When we consider regular bipartitie graphs, K\"onig's edge coloring theorem theorem and Hall's matching theorem coincide. Shafer~\cite{Shafer11} studied a related theorem of Birkhoff that a finite $n \times n$ doubly stochastic matrix is a convex linear combination of $n \times n$ permutation matrices.  Isbell proved a generalization of Birkhoff's theorem to the countable case.  Shafer showed that this countable version of Birkhoff's theorem is provable in $\WKLo$, and posed the following question.

\begin{question}[Shafer~\cite{Shafer11}]
What is the strength of the countable version of Birkhoff's theorem?
\end{question}

\bibliographystyle{amsplain}
\small

\providecommand{\bysame}{\leavevmode\hbox to3em{\hrulefill}\thinspace}
\providecommand{\MR}{\relax\ifhmode\unskip\space\fi MR }
% \MRhref is called by the amsart/book/proc definition of \MR.
\providecommand{\MRhref}[2]{%
  \href{http://www.ams.org/mathscinet-getitem?mr=#1}{#2}
}
\providecommand{\href}[2]{#2}

\end{document}